\newtheorem{theorem}{Theorem}
\newtheorem{corollary}[theorem]{Corollary}
\newtheorem{criterion}[theorem]{Criterion}
\newtheorem{definition}[theorem]{Definition}
\newtheorem{lemma}[theorem]{Lemma}
\newtheorem{proposition}[theorem]{Proposition}
\newtheorem{remark}[theorem]{Remark}
\newcommand{\pp}{\mathbb{P}}
\newcommand{\OO}{{\mathcal O}}
\newcommand{\Pic}{\operatorname{Pic}}
\newcommand{\rk}{\operatorname{rk}}
\newcommand{\im}{\operatorname{im}}
\newcommand{\codim}{\operatorname{codim}}
\newcommand{\degl}{\deg_L}
\newcommand{\mc}[1]{\mathcal{#1}}
\newcommand{\e}[1]{\mathbf{e}^{#1}}
\newcommand{\Z}{\mathbb{Z}}
\newcommand{\rG}{\rm G}
\newcommand{\la}{\lambda}
\newcommand{\qand}{\quad\text{and}\quad}
\begin{document}

\title{Holomorphic bundles for higher dimensional gauge theory}

\author{Marcos Jardim, Gr\'egoire Menet, Daniela M. Prata \and Henrique~N.~S\'a~Earp}
\address{IMECC - UNICAMP \newline
Rua S\'ergio Buarque de Holanda 651 \newline 
Cidade Universit\'aria, 13083-859, Campinas-SP, Brazil}
\email{jardim@ime.unicamp.br, menet@ime.unicamp.br,\newline daniela.mprata@gmail.com, henrique.saearp@ime.unicamp.br}

\begin{abstract}
Motivated by gauge theory under special holonomy, we present techniques to produce holomorphic bundles over certain noncompact $3-$folds, called building blocks, satisfying a stability condition `at infinity'. Such bundles  are known to parametrise solutions of the Yang-Mills equation over the \linebreak $\rG_2-$manifolds obtained from asymptotically cylindrical Calabi-Yau $3-$folds studied by Kovalev, Haskins et al. and Corti et al..

The most important tool is a generalisation of  Hoppe's stability criterion to holomorphic bundles over smooth projective varieties $X$ with $\Pic{X}\simeq\mathbb{Z}^l$, a result which may be of independent interest.

Finally, we apply monads to produce a prototypical model of the curvature blow-up phenomenon along a sequence of asymptotically stable bundles  degenerating into a torsion-free sheaf.

{\bf MSC 2010:} 14J60, 53C07, 14F05.
\end{abstract}

\maketitle

\vspace{-0.6cm}
\tableofcontents

\newpage
\section{Introduction}

This paper presents cohomological methods to construct and study examples of holomorphic bundles, over certain Fano $3-$folds, having the property of \emph{asymptotic stability} (see below) on a distinguished anticanonical divisor, said to be `at infinity' for geometrical reasons. It is based on the theory of instanton monads developed by several authors in the past three decades, in particular  \cite{Jardim2006,Jardim2007}.
Our motivation comes from gauge theory in higher dimensions, especially the interplay between Yang-Mills theory over Calabi-Yau 3-folds and $\rG_2-$manifolds.

%\subsection{Asymptotic stability over building blocks}
An important method to produce examples of compact \linebreak 7--manifolds with holonomy exactly $\rG_2$ is the \emph{twisted connected sum} construction  \cite{Kovalev2003,Kovalev2011,Corti2012}. It consists of gluing a pair of asymptotically cylindrical (ACyl) Calabi--Yau $3$--folds obtained from certain  smooth projective $3-$folds called  \emph{building blocks}. A building block $(Z,D)$ \cite[Definition~3.5]{Corti2012}
is given by a projective morphism $f: Z\to \pp^1$ such that $D:=f^{-1}(\infty)$ is a smooth anticanonical  $K3$ surface, under certain  mild topological assumptions; in particular, $D$ has trivial normal bundle. Choosing a convenient Kähler structure on $Z$, one can make $W:=Z\setminus D$ into an ACyl Calabi--Yau $3$--fold, that is, a non-compact Calabi--Yau with a tubular end modelled on $\mathbb{R}_+\times S^1\times D$. Then $S^1\times W$ is an ACyl $\rG_2$--manifold with a tubular end modelled on $\mathbb{R}_+\times T^2\times D$ \cite{Haskins2015,Corti2012a}.

In particular, several topological types of building blocks can be obtained as $Z=\operatorname{Bl}_\ell X$ by blowing up  Fano $3-$folds $X$ along a self-intersection curve $\ell\in \left\vert D\cdot D\right\vert$ for $D\in\left\vert-K_X\right\vert$; these will be the varieties of interest in the present paper. It is reasonable to expect similar methods to work on more general building blocks (e.g. blow-ups of so-called \emph{weak} Fanos), which shall be addressed  in the future. For a more detailed exposition of building blocks, we suggest the Introduction of \cite{SaEarp2013} and references therein.

From the point of view of gauge theory, the last named author established the existence of Hermitian Yang-Mills (HYM) connections over ACyl Calabi-Yau 3-folds \cite{SaEarp2009,SaEarp2011}. The concept of asymptotic stability emerges as the natural boundary condition for that analytical problem. Let $z=\e{-s+\bf{i}\alpha} $ be
the holomorphic coordinate along the tubular end, and denote by $D_z$ the corresponding
$K3$ fibre near infinity.
\label{def bundle E->W}
\index{infinity!stable at}
A bundle $E\rightarrow W$ is called \emph{asymptotically stable}
(or \emph{stable at infinity}) if it is the
restriction of an indecomposable holomorphic vector bundle $E\rightarrow
Z$ such that
        $\left. E\right\vert _{D}$ is stable (hence also $\left.
E\right\vert _{D_{z}}$ for small  $\left\vert z\right\vert <\delta
$).
Such a bundle admits a smooth Hermitian \emph{reference metric} $H_{0}$, with
the property that $\left. H_{0}\right\vert _{D_{z}}$ are the corresponding
HYM metrics on $\left. E\right\vert _{D_z}$, for $0\leq \left\vert
z\right\vert <\delta$, and which has `finite energy', in a suitable sense.

Our crucial motivation is the fact that, given an asymptotically stable bundle
with reference metric $\left(E,H_0  \right)$, there exists a nontrivial
smooth solution to the $\rG_2-$instanton equation on $p_1^*E\rightarrow
W\times S^1$ \cite[Theorem 58]{SaEarp2011}. Our central aim therefore is to construct explicit examples of such asymptotically stable bundles; actually it suffices to construct bundles $E\to X$ over the original Fano 3--fold with $E|_{D}$ stable, since stability pulls back under the proper transform $Z\to X$. 

It should be noted that, under certain rigidity and transversality assumptions, such solutions can be glued, according to the
twisted connected sum, to produce a  $\rG_2-$instanton
over the resulting \emph{compact} 7-manifold with holonomy $\rG_2$ 
\cite{SaEarp2013}.
Thus transversal matching pairs of asymptotically stable bundles parametrise \linebreak (some) solutions to the corresponding  $7-$dimensional
Yang-Mills equation. However, the matching problem
for the tubular $\rG_2-$instantons obtained with our methods is a nontrivial matter to be addressed in future work.

%\section*{Outline}

%%%%%%%%%%%%%%%%%%%%%%%%%%%%%%%%%%%%%%%%%%%%%%%%%%%%%%%%%%%%%%%%%%%%%%%%%%%%%%%%%%%%%%%%%%%%%%%%%
%%%%%%%%%%%%%%%%%%%%%%%%%%%%%%%%%%%%%%%%%%%%%%%%%%%%%%%%%%%%%%%%%%%%%%%%%%%%%%%%%%%%%%%%%%%%%%%%%

%%%%%%%%%%%%%%%%%%%%%%%%%%%%%%%%%%%%%%%%%%%%%%%%

%\newpage

%%%%%%%%%%%%%%%%%%%%%%%%%%%%%%%%%%%%%%%%%%%%%%%%

\subsection*{Outline}
%\addcontentsline{toc}{subsection}{Outline}
This article is organized as follows.  In
\emph{Section \ref{sec: Hoppe criterion}}
 we review standard stability theory and we establish  a generalisation of the so-called \emph{Hoppe criterion},
which gives sufficient conditions for stability of a bundle over a projective variety
with finitely generated Picard group in terms of the vanishing of certain cohomologies. In \emph{Section \ref{asb}} we construct bundles over  various types of building blocks of Picard rank $1$. In \emph{Section  \ref{sec: Hartshorne--Serre}} we use the famous Hartshorne--Serre correspondence to construct examples over polycyclic Fano 3-folds, and apply the generalised stability criterion to establish their 
asymptotic stability.  Finally,
\emph{Section \ref{sec: degenerations}}
 has a somewhat
different vein, illustrating a convenient use  of instanton monads to model degenerations of asymptotically
stable bundles into torsion-free sheaves, with an explicit calculation of
the curvature blow-up rate for a natural choice of metrics.

\subsection*{Acknowledgements}
%\addcontentsline{toc}{subsection}{Acknowledgements}
MJ is partially supported by  CNPq grant 303332/2014-0 and FAPESP grant 2014/14743-8. GM is supported by FAPESP grant 2014/05733-9. DMP was supported by FAPESP grant 2011/21398-7. HSE was supported by FAPESP grant 2009/10067-0 and is partially supported by CNPq grant 312390/2014-9 and FAPESP grant 2014/24727-0. We thank A. A. Henni and P. Coronica for comments and T. Walpuski for suggesting the topic of  \emph{Section \ref{sec: degenerations}}.
The authors thank especially the referee for numerous contributions to the manuscript. 

\newpage
%==============================================
\section{The generalised Hoppe criterion}
\label{sec: Hoppe criterion}
%==============================================

Let $X$ be a nonsingular projective variety, i.e. a nonsingular, projective, integral, separated Noetherian scheme of finite type over the field of complex numbers. Let $L$ be an ample line bundle over $X$, and set $n:=\dim X$.

The  \emph{$L-$degree} of a coherent sheaf $E\to X$ is defined as usual by
$$\degl E:= c_1(E)\cdot L^{n-1},$$
and, setting $r:=\rk(E)$, the \emph{$L$--slope} of $E$ is
\begin{equation}        \label{eq: mu_L}
        \mu_L( E) := 
        \frac{\degl E}{r} .
\end{equation}
Then $E $ is \emph{(semi)stable} if, for every proper coherent subsheaf $F \subset E$ such that $E/F$ is torsion-free, one has
$$
\mu_L (F) \underset{(\leq)}{<} \mu_L (E).
$$
If $E$ is locally free, in order to test for stability it suffices to consider  \emph{reflexive} subsheaves $F\subset E$.

\subsection{Hoppe's criterion over cyclic varieties}
Suppose further that $\Pic(X)\simeq\mathbb{Z}$; such varieties are called \emph{cyclic}. Given a locally free sheaf (or, equivalently, a holomorphic vector bundle) $E\to X$ as above, there is a unique integer $k_E$ such that 
$$
-r+1\le c_1(E(-k_E))\le 0.
$$ 
Setting $E_{\rm norm}:=E(-k_E)$, we say $E$ is \emph{normalized} if $E=E_{\rm norm}$. Then one has the following stability criterion 
\cite[Lemma 2.6]{Hoppe1984}:
\begin{proposition}[Hoppe criterion]\label{prop hoppe}
Let $E$ be a rank $r$ holomorphic vector bundle over a cyclic projective variety $X$. If $H^0((\wedge ^qE)_{\rm norm})=0$ for $1\leq q\leq r-1$, then $E$ is stable. If $H^0((\wedge ^qE)_{\rm norm}(-1))=0$ for $1\leq
q\leq r-1$, then $E$ is semistable.
\end{proposition}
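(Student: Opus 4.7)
The strategy I would follow is the classical one from Hoppe: reduce slope (semi)stability of $E$ to the non-existence of destabilising line sub-sheaves in each exterior power $\wedge^q E$, and then translate that into a cohomology condition whose threshold twist coincides, after normalisation, with the one in the statement.

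For the rank reduction, I would start from a saturated rank-$q$ coherent subsheaf $F \subset E$ and form $\det F = (\wedge^q F)^{**}$, which is a line bundle with $\deg(\det F) = \deg F = q\mu(F)$. Because $E/F$ is torsion-free, the inclusion $F \hookrightarrow E$ induces $\wedge^q F \hookrightarrow \wedge^q E$, and taking reflexive hull on the source yields an injection $\det F \hookrightarrow \wedge^q E$ of torsion-free sheaves. Thus $F$ destabilises $E$ iff $\det F$ is a destabilising line sub-sheaf of $\wedge^q E$, and the problem reduces to controlling line-bundle maps into the exterior powers for $q = 1, \ldots, r-1$.

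For the cohomological translation I would write $c_1(E) = c\cdot h$ with $h$ the ample generator of $\pic X$ and set $d = h^n$, so that an injection $\mathcal{O}_X(\ell) \hookrightarrow \wedge^q E$ destabilises $E$ (in the sense of slope stability) precisely when $\ell \ge \ell_* := \lceil qc/r \rceil$. A short calculation using $\rank(\wedge^q E) = \binom{r}{q}$ and $c_1(\wedge^q E) = \binom{r-1}{q-1}c\cdot h$ shows that the normalisation integer of $\wedge^q E$ is exactly $-\ell_*$, so $(\wedge^q E)_{\rm norm} = (\wedge^q E)(-\ell_*)$. A non-zero element of $H^0((\wedge^q E)_{\rm norm})$ is, by torsion-freeness, the same as an injection $\mathcal{O}_X(\ell_*) \hookrightarrow \wedge^q E$; and any injection with $\ell > \ell_*$ descends to one with $\ell = \ell_*$ upon pre-composition with multiplication by a global section of $\mathcal{O}_X(\ell - \ell_*)$, which exists because the ample generator is effective. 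Hence the vanishing $H^0((\wedge^q E)_{\rm norm}) = 0$ for $q = 1, \ldots, r-1$ eliminates every destabiliser and yields stability.

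For semistability, the destabilising inequality becomes strict, $\ell > qc/r$; in the case $qc/r \in \mathbb{Z}$ (automatic when $c_1(E) = 0$, as in the paper's applications) this shifts the critical twist to $\ell_* + 1$, so the corresponding cohomological condition becomes $H^0((\wedge^q E)(-\ell_* - 1)) = H^0((\wedge^q E)_{\rm norm}(-1)) = 0$, matching the second hypothesis. The main delicacy I anticipate is the rank reduction: one must allow for $F$ not being locally free, so that $\wedge^q F$ may fail to be reflexive and the injection into $\wedge^q E$ has to be constructed via double duals with due care. The remainder of the argument is bookkeeping with the $\lceil\cdot\rceil$ function and the normalisation convention.
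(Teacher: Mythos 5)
Your argument is the classical Hoppe/OSS one, and it is essentially the same skeleton as the only proof the paper actually writes out, namely that of the generalised criterion (Proposition \ref{prop gen Hoppe criterion}): a rank-$q$ subsheaf $F\subset E$ yields a nonzero map $\det F=(\wedge^{q}F)^{\vee\vee}\to\wedge^{q}E$, hence a section of a suitable twist of $\wedge^{q}E$, and the destabilising inequality is converted into a statement about which twist, with the ceiling function matching the normalisation. Your computation that the normalising twist of $\wedge^{q}E$ is $-\lceil qc/r\rceil$ is correct, and you are right to flag the double-dual care needed when $F$ is not locally free.

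There is, however, one step that does not hold at the stated level of generality. The descent ``any injection with $\ell>\ell_{*}$ descends to one with $\ell=\ell_{*}$ upon pre-composition with a global section of $\mathcal{O}_X(\ell-\ell_{*})$, which exists because the ample generator is effective'' uses an unproved (and in general false) assertion: for an arbitrary cyclic projective variety the ample generator of $\pic X$ need not be effective, nor need all of its positive powers be, so $H^{0}(\wedge^{q}E(-\ell))\neq0$ for some $\ell>\ell_{*}$ does not formally force $H^{0}((\wedge^{q}E)_{\rm norm})\neq0$. This is harmless for every variety the paper actually uses (projective spaces, complete intersections, and products of these), but it is an unstated hypothesis; note that the paper's generalised criterion is formulated precisely to sidestep this point, by demanding vanishing of $H^{0}((\wedge^{q}E)_{\rm L-norm}(\vec{p}))$ for \emph{all} twists $\vec{p}$ of non-positive degree, so that no descent between twists is ever required. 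Separately, your semistability argument is, as you yourself note, only valid when $qc/r\in\mathbb{Z}$; when $qc/r\notin\mathbb{Z}$ the critical twist for a semi-destabiliser is $\ell_{*}$ rather than $\ell_{*}+1$, and the hypothesis $H^{0}((\wedge^{q}E)_{\rm norm}(-1))=0$ does not control it (for instance $E=\mathcal{O}(1)\oplus\mathcal{O}^{\oplus2}$ on $\mathbb{P}^{n}$ satisfies that hypothesis for $q=1,2$ yet is destabilised by $\mathcal{O}(1)$). You correctly confined that half of the claim to the divisibility case, which covers all the rank-$2$, $c_{1}=0$ bundles used later in the paper, so nothing downstream is affected.
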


\noindent NB.: In general, the converse of the previous statement is false. Take for instance the nullcorrelation bundle  $N\to\pp^{2k+1}$ with $k\ge 2$, i.e., the stable rank $2k$  bundle  \cite[Thm 1.4]{Ein1982} given by the following exact sequence:
$$ 
\xymatrix{
0 \ar[r]& \OO_{\pp^{2k+1}}(-1) \ar[r]& \Omega^1_{\pp^{2k+1}}(1) \ar[r]& N \ar[r]& 0. 
}
$$
As shown in \cite[Lemma 1.10]{Ancona1994},  for each $0\le j\le k-1$, the exterior product  $\Lambda^{2j}N$ has the trivial line bundle $\OO_{\pp^{2k+1}}$ as a direct summand, hence a nontrivial section.

Hoppe's original result is proved only for the case $X=\pp^n$, but the proof generalises easily for cyclic varieties. Conversely, for rank $2$ bundles stability is in fact equivalent to the nonexistence of holomorphic sections:

\begin{criterion}\label{crit hoppe}
If $F$ is a rank $2$ holomorphic vector bundle over a cyclic variety $X$, then $F$ is stable if and only if $h^0(F_{\rm norm})=0$.
\end{criterion}

\noindent Compare the above with \cite[Lemma 1.2.5, p.165]{Okonek1980} for the case $X=\pp^n$; the proof for $X$ cyclic is the same. A similar necessary and sufficient criterion for rank $3$ bundles can also be found in \cite[Lemma 1.2.6b, p.167]{Okonek1980}. In any case, Hoppe's approach has been, up until now, the most general framework for cohomological stability criteria.

\subsection{Hoppe's criterion over polycyclic varieties}

%Let $\left\{\Upsilon_i\right\}_{0\leq i\leq l}$, be a basis of $\Pic(X)$; given a vector of multiplicities $\vec{p}\in\mathbb{Z}^{l+1}$,  one defines
%$$\OO_{X}(\vec{p}):=\mathcal{O}_{X}(p_{0}\Upsilon_0+...+p_{l}\Upsilon_l).$$

We now present a generalisation of the Hoppe stability criterion for the
much larger class of \emph{polycyclic} varieties with $\Pic(X)=\Z^\ell$.
Given a divisor $B$ on $X$, we define, for convenience, $$\delta_L (B):=\degl\OO_X(B). $$

\begin{theorem}[Generalized Hoppe Criterion]\label{thm: GHC}
Let  $G \rightarrow X$ be a holomorphic vector bundle of rank $r \geq 2$ over a polycyclic variety $X$ equipped with a polarisation $L$; if
$$
H^{0}(X, (\wedge^s G)\otimes\OO_X(B)) = 0 %\qquad (*)
$$
for all $B \in \Pic(X)$ and all $s\in\left\{1,\dots,(r-1)\right\}$ such that
$$
\delta_L(B)  \underset{(<)}{\leq} -s\mu_{L}(G) %\qquad(i)
$$
then $G$ is (semi-)stable.

Conversely, if $G$ is (semi-)stable then
$$
H^{0}(X, G\otimes\OO_X(B)) = 0 ,\; \forall  B \in \Pic(X) 
\,\text{ such that } \delta_L(B) \underset{(<)}{\leq} - \mu_L (G).
$$
\end{theorem}

\begin{proof}
First assume that there is $F \hookrightarrow G$ a torsion free sub-sheaf of rank $s$, such that $\det F = \OO_ X(B)$. The inclusion induces a map $ \wedge^s F \hookrightarrow \wedge^s G$ and so 
$$
H^{0}(X, (\wedge^sG)\otimes\OO_X(-B)) \neq 0.
$$
By hypothesis, we have
$$
\delta_L(-B)  \underset{(\geq)}{>} -s\mu_{L}(G),
$$
hence
$$
\mu_{L}(F)=\frac{\delta_L(B)}{s}  
\underset{(\leq)}{<} \mu_{L}(G)
$$
and therefore $F$ is not a destabilising sheaf. 

Conversely, assume that $H^{0}(X, G\otimes\OO_X(B))$ has a section with 
$$
\delta_L(B) \underset{(<)}{\leq} -\mu_L(G).
$$
This induces a map $\OO_X(B) \hookrightarrow G$ with
$$
\mu_L(\OO_X(B)) \underset{(>)}{\geq} \mu_L(G)\\
$$
so $G$ is not (semi-)stable.
\end{proof}

\begin{corollary}
Let  $G \rightarrow X$ be a holomorphic vector bundle of rank 2 over a polycyclic variety, and let $L$ be a polarisation on $X$. The bundle $G$ is (semi)-stable if and only if 
$$
H^{0}(X, G\otimes\OO_X(B)) = 0
$$
for every $B \in \Pic(X)$ such that 
$$
\delta_L(B)  \underset{(<)}{\leq} -\mu_{L}(G). 
$$
\end{corollary}

\section{Asymptotic stability over cyclic Fano-type building blocks}\label{asb}

The existence of asymptotically stable bundles is a natural and important question, since they parametrise solutions to the HYM equation and hence \linebreak $\rG_2-$instantons.
Let us observe form the outset that an affirmative example, if somewhat exotic, is known from Mukai's study of certain prime subvarieties $X_{22}$ $\subset\pp^{13}$; such spaces come equipped with a rank $2$ holomorphic bundle which is indeed asymptotically stable \cite{Mukai1987,Mukai1992}.
We propose a much more general setup to obtain examples over a larger class of building blocks. Let us begin by recalling their precise definition  \cite[Definition~3.5]{Corti2012}. 

\begin{definition}
A \emph{building block} is a nonsingular algebraic 3-fold $Z$ together with a projective morphism $f: Z\rightarrow \mathbb{P}^{1}$ satisfying the following assumptions:
\begin{itemize}
\item[(i)] 
the anti-canonical class $-K_{Z}\in H^{2}(Z,\Z)$ is primitive.
\item[(ii)]
$D=f^{-1}(\infty)$ is a non-singular K3 surface and $D\sim -K_{Z}$.
\end{itemize}
In addition, identify $H^{2}(D,\Z)$ with the K3 lattice $L$ (i.e. choose a marking for $D$), and let $N$ denote the image of $H^{2}(Z,\Z)\rightarrow H^{2}(D,\Z)$.
\begin{itemize}
\item[(iii)]
The inclusion $N\hookrightarrow L$ is primitive.
\item[(iv)]
The groups $H^{3}(Z,\Z)$ and $H^{4}(Z,\Z)$ are torsion-free.
\end{itemize}
\end{definition}

The building blocks considered in this paper are given by \cite[Definition~3.15]{Corti2012}.

\begin{proposition}\label{FanoBlock}
Let $X$ be a Fano 3-fold, $|D_{0},D_{\infty}|\subset |-K_{X}|$ a generic pencil with (smooth) base locus $\ell$, $D\in |D_{0},D_{\infty}|$ generic, and $Z$ the blow-up of $X$ at $\ell$. Then $D$ is a smooth K3 surface, its proper transform in $Z$ is isomorphic to $D$, and $(Z,D)$ is a building block. Such building blocks are called of \emph{Fano type}, while $X$ is refered to as the \emph{underlying Fano 3-fold}.
\end{proposition}

Throughout this Section, $X$ will denote the Fano 3-fold underlying a building block $(Z,D)$ constructed as above.

\begin{remark}
Let $(Z,D)$ be a building block, then we say that a bundle $E$ on $Z$ is \emph{asymptotically stable} if $E_{|D}$ is stable. Similarly, if $X$ is a Fano 3-fold, $F$ a bundle on $X$ and $D\in |-K_{X}|$ a smooth K3 surface, we say that $F$ is \emph{asymptotically stable} if $F_{|D}$ is stable.

We remark that in order to provide asymptotically stable bundles on a given building block it is enough to construct asymptotically stable bundles on its underlying Fano 3-fold. Indeed, let $X$ be a Fano 3-fold, $|D_{0},D_{\infty}|\subset |-K_{X}|$ a generic pencil with (smooth) base locus $\ell$, $D\in |D_{0},D_{\infty}|$ generic, and $r:Z\rightarrow X$ the blow-up of $X$ at $\ell$. If $F$ is an asymptotically stable bundle on $X$ then $r^{*}F$ is an asymptotically stable bundle on $Z$. This is the strategy we adopt below.
\end{remark}

\subsection{Stable bundles from monad constructions}
\label{subsec: stable bundles from monads}

Let us briefly review one of the main techniques to produce holomorphic bundles
with prescribes topology and stability properties. This technology
will then be mildly adapted for our main purpose of constructing asymptotically
stable bundles. 

 A \emph{monad} on $X$ is a complex of locally free sheaves
$$
\xymatrix{
{\rm M}_\bullet : & M_{0} \ar[r]^-{\alpha} & M_1 \ar[r]^-{\beta} & M_2 }
$$
such that $\beta$ is locally right-invertible and $\alpha$ is locally left-invertible. The (locally free) sheaf $E:=\ker\beta/\im\alpha$ is called the \emph{cohomology} of ${\rm M}_\bullet$.
Monads are a valuable tool in the theory of sheaves over projective varieties and have been studied by many authors over the past four decades. We will be fundamentally interested in the so-called \emph{linear sheaves} on $X$, i.e., coherent sheaves (of rank $r$) which can be obtained as the cohomology of a \emph{linear monad} of the form
\begin{equation}        \label{eq: linear monad}
\xymatrix{
0 \ar[r] & \OO_{X}(-1)^{\oplus c} \ar[r]^-{\alpha } & \OO_{X}^{\oplus r+2c} \ar[r]^-{\beta} & \OO_{X}(1)^{\oplus c} \ar[r] & 0.
}
\end{equation}

The paper \cite{Jardim2007} is of particular interest to us, since the first-named author constructed examples of stable linear sheaves over cyclic varieties of dimension $3$.
More precisely, the following result is proved, as an application of the Hoppe criterion (Proposition \ref{prop hoppe}):

\begin{theorem} \label{Thm cyclic instanton monad }
Let $X$ be a cyclic nonsingular complex projective 3-fold with fundamental class $h:= c_1(\OO_{X}(1))$, and $c\geq1$ an integer; then a linear monad of the form
\begin{equation} \label{eq instanton monad}
\xymatrix{
0 \ar[r] & \OO_{X}\left(-1\right)^{\oplus c} \ar[r]^-\alpha & \OO_{X}^{\oplus2+2c} \ar[r]^-\beta & \OO_X\left(1\right)^{\oplus c} \ar[r]&  0
}
\end{equation}
has the following properties:
\begin{itemize}
\item[(i)] the kernel $K:=\ker\beta$ is a stable bundle with
\begin{displaymath}
\rk(K)=c+2,\quad c_1(K)=-c . h, \quad c_2(K)=\frac{1}{2}\left( c^2+c \right). h^2.
\end{displaymath}
\item[(ii)] the linear sheaf  $E:= \ker\beta/\im \alpha$ is a stable bundle with
\begin{displaymath}
\rk(E)=2,\quad c_1(E)=0, \quad c_2(E)=c\cdot h^2.
\end{displaymath}
\end{itemize}
\end{theorem}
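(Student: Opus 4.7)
The plan is to split the monad \eqref{eq instanton monad} into the two short exact sequences
\begin{equation*}
    0 \to K \to \oo^{\oplus 2+2c} \overset{\beta}{\longrightarrow} \oo(1)^{\oplus c} \to 0,
    \qquad
    0 \to \oo(-1)^{\oplus c} \overset{\alpha}{\longrightarrow} K \to E \to 0,
\end{equation*}
and to derive everything in the statement from them: the numerical invariants by multiplicativity of the total Chern class, and the two stability claims by Hoppe's criterion (Proposition \ref{prop hoppe}) in the cyclic setting.

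For the Chern classes, the first sequence gives $c(K) = (1+h)^{-c}$, which immediately yields $\rank(K) = c+2$, $c_1(K) = -ch$ and $c_2(K) = \tfrac{1}{2}c(c+1)h^2$. The second sequence then gives $c(E) = c(K)(1-h)^{-c} = (1-h^2)^{-c}$, hence $\rank(E) = 2$, $c_1(E) = 0$ and $c_2(E) = c\cdot h^2$, matching every numerical claim.

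For the stability of $E$, rank $2$ with $c_1(E)=0$ makes $E$ already normalised, so Proposition \ref{prop hoppe} reduces the task to the single vanishing $H^0(E) = 0$. The long exact sequence of the second short exact sequence above gives
\begin{equation*}
    H^0(K) \to H^0(E) \to H^1(\oo(-1))^{\oplus c};
\end{equation*}
the rightmost term vanishes by Kodaira vanishing on the Fano 3-folds in Kovalev's list, and the first sequence identifies $H^0(K)$ with the kernel of the section-level map $\beta_\ast\colon H^0(\oo)^{\oplus 2+2c}\to H^0(\oo(1))^{\oplus c}$. This kernel vanishes whenever the $(2+2c)$ columns of the coefficient matrix of $\beta$ are linearly independent in $H^0(\oo(1))^{\oplus c}$, which is a mild genericity condition that can be built into the construction of $\beta$.

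The stability of $K$ is what I expect to be the main obstacle: here Proposition \ref{prop hoppe} requires $H^0((\wedge^q K)_{\rm norm}) = 0$ for every $1\le q\le c+1$. The idea is to take exterior powers of the first short exact sequence, producing a Koszul-style resolution of $\wedge^q K$ by direct sums of the line bundles $\oo(-j)$ for $0 \le j \le q$; after identifying the normalisation integer $k_q$ satisfying $-\binom{c+2}{q}+1 \le c_1((\wedge^q K)(k_q))\le 0$, the required vanishing follows by chasing the hypercohomology of the resolution and invoking Kodaira-type vanishing and the standard line-bundle cohomology on the base. The combinatorial bookkeeping of the normalisations across $1\le q\le c+1$ is the technical heart of the argument, with the boundary case $q=1$ reducing to the injectivity of $\beta_\ast$ discussed above and the higher $q$ cases controlled by the strict negativity of the twists appearing in the resolution.
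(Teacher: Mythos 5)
First, a point of reference: the paper does not prove Theorem \ref{Thm cyclic instanton monad } at all --- it is quoted from \cite{JardimBullBraz}, so the only ``proof'' in the text is a citation, and your attempt has to be judged on its own merits. Your Chern class computations are correct, and your reduction of the stability of $E$ to $H^0(E)=0$ via Proposition \ref{prop hoppe}, together with the identification $H^0(E)\cong H^0(K)=\ker H^0(\beta)$ (using $H^0(\oo(-1))=H^1(\oo(-1))=0$, which holds by Serre duality and Kodaira vanishing on any cyclic $X$, not only the Fano ones), is sound. But at the decisive moment you do not prove $\ker H^0(\beta)=0$: you declare it ``a mild genericity condition that can be built into the construction of $\beta$''. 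The theorem asserts stability for \emph{every} monad of the form (\ref{eq instanton monad}), with no genericity hypothesis, so this is a genuine gap rather than a cosmetic one. Indeed $H^0(K)=0$ is simultaneously the $q=1$ instance of Hoppe's criterion for $K$ (since $c_1(K)=-c\cdot h$ already lies in the normalised range, $K_{\rm norm}=K$) and, via $H^0(E)\cong H^0(K)$, the entire content of the stability of $E$: a nonzero constant vector in $\ker H^0(\beta)$ would give a trivial subsheaf $\oo\hookrightarrow K$ with $\mu(\oo)=0>\mu(K)$ and a section of $E$ with $\mu(\oo)=0=\mu(E)$, violating both stability claims. Ruling this out for an arbitrary monad is precisely the nontrivial step, and it is where the argument of \cite{JardimBullBraz} has to do its work.

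The sketch for $2\le q\le c+1$ also rests on a miscalculation. Taking exterior powers of $0\to K\to\oo^{\oplus 2+2c}\to\oo(1)^{\oplus c}\to 0$ yields a resolution $0\to\Lambda^qK\to\Lambda^q\bigl(\oo^{\oplus 2+2c}\bigr)\to\Lambda^{q-1}\bigl(\oo^{\oplus 2+2c}\bigr)\otimes\oo(1)^{\oplus c}\to\cdots\to S^q\bigl(\oo(1)^{\oplus c}\bigr)\to 0$, whose terms are sums of $\oo(j)$ with $j\ge 0$, not $\oo(-j)$; moreover the normalising twist works out to $k_q=\left\lfloor qc/(c+2)\right\rfloor\ge 0$, so after normalisation every line bundle in the resolution still carries a nonnegative twist. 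Their $H^0$'s are therefore large and nonzero, no Kodaira-type vanishing is available, and the ``strict negativity of the twists'' you invoke simply is not present. The required vanishing $H^0\bigl(\Lambda^qK(k_q)\bigr)=0$ must instead be extracted from injectivity of the induced maps on global sections (or some other mechanism specific to these monads), which your outline does not supply. In summary, the architecture --- two short exact sequences, Chern classes by multiplicativity of $c(\cdot)$, Hoppe's criterion --- matches the strategy of \cite{JardimBullBraz}, but the two cohomology vanishings that actually carry both stability statements are left unproved, and one of them is argued from a premise that is false.
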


A simple example of a linear monad of the form (\ref{eq instanton monad}) with $c=1$ can be given as follows. 
Let $X$ be a nonsingular hypersurface of degree $d$ in $\mathbb{P}^4$ not containing the point $[0:0:0:0:1]$. Then the complex
\begin{equation}\label{ex-monad}
\xymatrix{
0 \ar[r]& \OO_{X}(-1) \ar[r]^-\alpha &
\OO_{X}^{\oplus 4} \ar[r]^-\beta& \OO_{X}(1) \ar[r]& 0
}\end{equation}
given in homogeneous coordinates $[x_0:x_1:x_2:x_3:x_4]$ of $\mathbb{P}^4$ by 
$$ \alpha = \left(\begin{array}{c} x_3 \\ x_2 \\ -x_1 \\ -x_0 \end{array}\right)
~~ {\text{and}} ~~
\beta = \left(\begin{array}{cccc} x_0 ~&~ x_1 ~&~ x_2 ~&~ x_3 \end{array}\right) ~~ $$
is a monad on $X$, since $\alpha$ is injective and $\beta$ is surjective at every point of $X$. Examples with higher values for $c$ may be found in \cite[Section 3]{Jardim2007}.

\subsection{Asymptotically stable monad cohomologies}

Let $(Z,D)$ be a building block of Fano type, with $X$ being the underlying Fano 3-fold. Suppose  further that $X$ is  cyclic and that  $D\in\left\vert -K_{X} \right\vert$ is a generic cyclic anticanonical divisor, so that 
$$ \Pic X\simeq\Z\simeq\Pic D. $$ 
Then we can assign a monad construction with stable cohomology bundle $E$, say, to a large number of such underlying Fano 3-folds, for instance:
\begin{multicols}{2}
\begin{enumerate}[(i)]
\label{list Kovalev}
        \item
        $X=\mathbb{P}^3$;
        \item
        $X \overset{2:1}{\underset{D}{\longrightarrow}}\mathbb{P}^3$;
        \item
        $X\subset \mathbb{P}^4$, hypersurface of degree 2, 3 or 4;
\end{enumerate}
\end{multicols}

The strategy consists in adapting the linear monads from Theorem \ref{Thm cyclic instanton monad } to obtain instanton bundles over $X$. However, when generalising results from projective spaces to wider classes of projective varieties, one often encounters difficulties because $\pp^n$ does not `have as much cohomology'. This principle has the following precise significance for us:      
\begin{definition}\label{def-wic}
A line bundle $L\rightarrow X$ over a projective variety of dimension $n$ is said to be \emph{without intermediate cohomology (WIC)} if
\begin{eqnarray*}
        H^i\left( L^{\otimes k}\right)=0,
        \qquad \forall
        \left\{
        \begin{array}{l}
                i=1,\dots,n-1 \\
                k\in\mathbb{Z} \\
        \end{array}.
        \right.
\end{eqnarray*}
A projective variety $X$ is said to be WIC if the line bundle $\OO_X(1)$ is WIC.
\end{definition}

\begin{remark}   \label{lemma Ox(1) is WIC}
It is not difficult to see, using Kodaira's vanishing theorem, that if $X$ is cyclic and Fano, then the positive generator of $\Pic X$, denoted $\OO_X(1)$, is WIC. 
\end{remark}
\noindent Note that complete intersection subvarieties of dimension at least $3$ in $\pp^n$, $n\geq4$, are cyclic and WIC.

Then indeed our instanton linear sheaves are  asymptotically stable bundles:

\begin{proposition}  \label{Prop asymp stable middle cohomology}
Let $X$ be a nonsingular, cyclic Fano threefold, and let $D\subset X$ be a cyclic anticanonical divisor. If $E\rightarrow X$ arises from an instanton monad of the form $(\ref{eq instanton monad})$, then $E$ is an asymptotically stable bundle.
\end{proposition}

The rest of this \emph{Subsection} is devoted to the proof of Proposition \ref{Prop asymp stable middle cohomology}. First of all,  since $E$ can be obtained as the cohomology of a monad of the form $(\ref{eq instanton monad})$, Theorem~\ref{Thm cyclic instanton monad } guarantees that it is stable. Denote by   $\OO_X(1)$ the polarisation,   $\sigma_D\in H^0(K^{-1}_X)$  the section cutting out $D$,  $d:=\deg D$ its degree, and $r_{X,D}$ the restriction map; so the restriction sequence reads
\begin{equation}        \label{eq restriction sequence}
\xymatrix{
0 \ar[r]&E(-d) \ar[r]^-{\sigma_D}& E \ar[r]^-{r_{X,D}}&E|_D \ar[r]&0
}
\end{equation}
Moreover, setting $K:=\ker\beta$ and twisting the monad by $\OO_X(-d)$, the relevant data fit in the following \emph{canonical diagram}:

$$
\xymatrix{
&0 \ar[d]&&&\\
& \OO_X\left(-(d+1)\right)^{\oplus c} \ar[d] & & & \\
0 \ar[r] & K(-d) \ar[r]\ar[d] & \OO_X(-d)^{\oplus 2+2c} \ar[r] & \OO_X(-\left(d-1\right))^{\oplus c} \ar[r] & 0\\
0 \ar[r] & E(-d) \ar[r]\ar[d] & E \ar[r] & E|_D \ar[r] & 0\\
&0&&&
}
$$
%with the following abbreviations:
%\begin{displaymath}
%\begin{array}{rcl}
%P&:=&\OO_X(-d)^{\oplus 2+2c}\\
%Q&:=&\OO_X(-\left(d-1\right))^{\oplus c}.
%\end{array}
%\end{displaymath}

Note that $\left.E\right \vert_D$ is a rank $2$ bundle with $c_1(\left.E\right \vert_D) = 0$, thus $\left.E \right \vert_D = \left. (E \right \vert_D)_{\rm norm}$. In view of Criterion \ref{crit hoppe} and since $h^0(E)=0$, it follows from the second row of the canonical diagram that it suffices to check the vanishing at
\begin{displaymath}
        h^1\left( E(-d) \right)=0.
\end{displaymath}
Note that $h^0(\OO_X(-k))=0$ and, since $X$ is WIC (cf. Remark \ref{lemma Ox(1) is WIC}), $h^2\left( \OO_X\left( k \right) \right)=0$ for all $k\in\mathbb{Z}$. It follows from the first row in the canonical diagram that $h^1(K(-d))=0$. Finally, from the column of the canonical diagram we have $h^1\left( E(-d) \right)=0$, which concludes the proof.

In particular, Proposition \ref{Prop asymp stable middle cohomology} gives many examples of rank $2$ asymptotically stable bundles over varieties such as (i) and (iii) on page \pageref{list Kovalev}. The same examples can also be pulled back to double covers of type (ii).

\section{Asymptotically stable Harshorne-Serre bundles}
\label{sec: Hartshorne--Serre}

Let $X$ be a complex manifold and $Y\subset X$ be a codimension 2 local complete intersection subscheme. A bundle $E\to X$ of rank $r$ will be called a \emph{Hartshorne--Serre bundle obtained from $Y$} if 
there exists a line bundle $\mathcal{L}$ with an exact sequence: 
$$\xymatrix{ 0\ar[r]&\mathcal{O}_{X}\ar[r] & E\ar[r] & \mathcal{I}_{Y}\otimes \mathcal{L}\ar[r]& 0,}$$
where $I_{Y}$ is the ideal sheaf of $Y$ in $X$. Heuristically, we think of $E$ as a global extension of the normal bundle of $Y$, in a sense which we will now make precise for the case of rank $2$.

The following instance of  \cite[Theorem 1]{Arrondo} gives sufficient conditions for the existence of Hartshorne--Serre bundles; to the interested reader, we strongly recommend the provided reference for a detailed and user-friendly exposition of that  construction.

\begin{theorem}[Hartshorne--Serre construction in rank $2$]
\label{thm: Hartshorne--Serre}
Let $Y\subset X$ be a local complete intersection subscheme of codimension 2 in a smooth algebraic variety. If there exists a line bundle $\mathcal{L}\to X$ such that $H^2(X,\mathcal{L}^*)=0$, and $\wedge^2\mathcal{N}_{Y/X}=\mathcal{L}|_Y$, then there exists a rank $2$ Hartshorne--Serre bundle obtained from $Y$ such that
\begin{itemize}
\item[(i)] $\wedge^2E=\mathcal{L}$,
\item[(ii)] $E$ has a global section whose vanishing locus is $Y$.
\end{itemize}
\end{theorem}

\subsection{Generalized Hoppe criterion for the Harshorne-Serre construction}

We will produce asymptotically stable examples over certain Fano-type building blocks as Hartshorne--Serre bundles. In order to check their stability, the following adapted version of the polycyclic criterion of Theorem \ref{thm: GHC} will be instrumental:
\begin{proposition}\label{Hartshorne}
Let $X$ be a polycyclic complex manifold  endowed with a polarisation $L$. Let $E$ be a rank $2$ Hartshorne--Serre bundle obtained from a codimension $2$ complete intersection sub-scheme $Y$.
Then $E$ is stable (resp. semi-stable) if 
\begin{itemize}
\item[(i)] $\mu_{L}(E)\underset{(\geq)}{>}0$, 
\item[(ii)] for every effective divisor $\mathcal{S}$ with $\delta_L(\mathcal{S})  \underset{(<)}{\leq} \mu_{L}(E)$, $Y$ is not contained in $\mathcal{S}$.
\end{itemize}
\end{proposition}

\begin{proof}
We have the following exact sequence:
$$\xymatrix{ 0\ar[r]&\mathcal{O}_{X}\ar[r] & E\ar[r] & \mathcal{I}_{Y}\otimes \mathcal{L}\ar[r]& 0.}$$
To apply Theorem \ref{thm: GHC}, we tensorize by $\mathcal{O}_{X}(B)$ with $\delta_L(B)  \underset{(<)}{\leq}  -\mu_{L}(E)$ to obtain:
$$\xymatrix@C15pt{ 0\ar[r]&\mathcal{O}_{X}(B)\ar[r] & E\otimes\mathcal{O}_{X}(B)\ar[r] & 
\mathcal{I}_{Y}\otimes \mathcal{L}\otimes\mathcal{O}_{X}(B)\ar[r]& 0,}$$
which induces an exact sequence on cohomology:
$$\xymatrix@C10pt{ 0\ar[r]&H^{0}(X,\mathcal{O}_{X}(B))\ar[r] & H^{0}(X,E\otimes\mathcal{O}_{X}(B))\ar[r] & H^{0}(X,\mathcal{I}_{Y}\otimes \mathcal{L}\otimes\mathcal{O}_{X}(B)).}$$
Assumption (i) implies $\delta_L(B)<0$, so  $H^{0}(X,\mathcal{O}_{X}(B)=0$.
As to the vanishing of the right-hand term, let  $B':=B+c_{1}(\mathcal{L})=B+c_{1}(E)$, so that $\delta_L(B')  \underset{(<)}{\leq} \mu_{L}(E)$.
By assumption (ii), there is no effective divisor of class $B'$ containing $Y$, and we claim therefore
$$H^{0}(X,\mathcal{I}_{Y}\otimes \mathcal{L}\otimes\mathcal{O}_{X}(B))=0.$$
Indeed, considering the exact sequence
$$\xymatrix{ 0\ar[r]&\mathcal{I}_{Y}\otimes \mathcal{L}\otimes\mathcal{O}_{X}(B)\ar[r] & \mathcal{L}\otimes\mathcal{O}_{X}(B)\ar[r] & \mathcal{L}\otimes\mathcal{O}_{X}(B)_{|Y}\ar[r]& 0,}$$
a global section of $\mathcal{I}_{Y}\otimes \mathcal{L}\otimes\mathcal{O}_{X}(B)$ provides a global section of $\mathcal{L}\otimes\mathcal{O}_{X}(B)$, which is trivial on $Y$.
\end{proof}

\begin{remark}
When the Picard rank of $X$ is large, there is a helpful technique to find necessary conditions on the class of an effective divisor $\mathcal{S}$ so that 
$$
\delta_L(\mathcal{S})  
\underset{(<)}{\leq} \mu_{L}(E).
$$
Suppose first that $X$ is a surface. Expressing  $\left[\mathcal{S}\right]$ in terms of the basis of $\Pic X$, we obtain constraints in inequality form. As an obvious starter we have:
\begin{equation}
\delta_L(\mathcal{S})>0.
\end{equation}
Now let $C$ be curve in $X$ such that $\delta_L(\mathcal{S})<\delta_L(C)$.
Then the following  sequence is exact:
\begin{equation}\label{es}
\xymatrix@C15pt@R0pt{ 0\ar[r]&H^{0}(X,\mathcal{O}_{X}(\mathcal{S}-C))\ar[r] & H^{0}(X,\mathcal{O}_{X}(\mathcal{S}))
\ar[r] & H^{0}(X,\mathcal{O}_{X}(\mathcal{S})_{|C}).}
\end{equation}
By assumption $H^{0}(X,\mathcal{O}_{X}(\mathcal{S}-C))=0$, and since $H^{0}(X,\mathcal{O}_{X}(\mathcal{S}))\neq0$ it follows that $H^{0}(X,\mathcal{O}_{X}(\mathcal{S})_{|C})\neq0$. Hence $\deg \mathcal{O}_{X}(\mathcal{S})_{|C}\geq 0$, so:
\begin{equation}
\mathcal{S}\cdot C\geq0.
\label{findS}
\end{equation}

The technique also works when $\dim X>2$. Let $C$ be an effective divisor in $X$ such that
\begin{itemize}
\item[(i)] $\delta_L(S) < \delta_L(C)$; 
\item[(ii)] there exists an ample class $\mathcal{A}$ of $C$ such that $\mathcal{A}=A|_C$ with $A$ being an ample class on $X$.
\end{itemize}
Then, as before, we have the exact sequence (\ref{es}), from which it follows that \linebreak $H^{0}(X,\mathcal{O}_{X}(\mathcal{S})_{|C})\ne0$. As a consequence, either $S|_C$ is an effective divisor on $C$ or it vanishes. Hence, by the Nakai--Moishezon criterion, $\mathcal{A}^{n-2}\cdot S|_C\ge0$, that is to say $S\cdot C\cdot A^{n-2}\ge0$.
\end{remark}

\subsection{The blow-up of $\pp^3$ along a plane conic}

We show applications of this adapted criterion by providing two asymptotically stable bundles on the particular Fano 3--fold listed no. 30 
in \cite{Mori}. One could apply the same method to obtain more examples over other building blocks.  

Let $r:X\rightarrow \mathbb{P}^{3}$ be the  blow-up of $\mathbb{P}^{3}$ along a plane conic $C$  
and $D\in \left|-K_{X}\right|$ be a  generic smooth K3 surface, corresponding to the proper transform of a quartic in $\mathbb{P}^{3}$ containing $C$. 
We denote by $H=r^{*}(H_{0})$ the pullback of the hyperplane  $H_{0}\subset\mathbb{P}^3$, by $\widetilde{C}$ the exceptional divisor, by $h=r^{*}(h_{0})$ the pullback of a line $h_{0}\subset\mathbb{P}^3$ and by $l$ a  fibre of $\widetilde{C}\rightarrow C$.

\begin{lemma}\label{Johannes}
The Picard lattice of $D$ is spanned by the hyperplane class $A$ and the intersection with the exceptional divisor, which is isomorphic to $C$ in $D$. The intersection form in this basis is 
$$\begin{pmatrix}
4&2     \\
2 & -2 \\
 \end{pmatrix}.$$
\end{lemma}
\begin{proof}
The curve $C$ has genus $0$ in $D$, hence $C^{2}=-2$.
Let $j:D\hookrightarrow \mathbb{P}^3$ be the inclusion map.
We have $A=j^{*}(H_{0})$,
hence $A^{2}=j^{*}(H_{0}^{2})=j^{*}(H_{0}^{2})\cdot D$.
Then the projection formula yields:
\begin{align*}
A^{2}&=H_{0}^{2}\cdot 4H_{0}=4.
\end{align*}
Similarly,  $A\cdot C= j^{*}(H_{0})\cdot C$ 
implies:
\begin{align*}
A\cdot C&=H_{0}\cdot 2h_{0}=2.\qedhere
\end{align*}
\end{proof}

In each of the following examples, we verify the hypotheses of Theorem \ref{thm: Hartshorne--Serre} for the existence of a Hartshorne--Serre bundle, then we check its asymptotic stability using Proposition \ref{Hartshorne} on its restriction to the K3 surface $D$.

\subsubsection{First example}

We apply Theorem \ref{thm: Hartshorne--Serre} to $X$ as above, with 
$$
Y=l \qand \mathcal{L}=\mathcal{O}_{X}(\widetilde{C}).
$$

\begin{proposition}\label{exa1}
Let $X\rightarrow \mathbb{P}^{3}$ be the  blow-up of $\mathbb{P}^{3}$ along
a plane conic $C$, $D\in \left|-K_{X}\right|$ be a  generic smooth K3 surface, corresponding
to the proper transform of a quartic in $\mathbb{P}^{3}$ containing $C$, and $l\subset X$ a fibre of the exceptional divisor $\widetilde{C}\rightarrow
C$; then there exists a rank 2 Hartshorne--Serre  bundle $E\to X$ obtained from $l$ such that:
\begin{itemize}
\item[(i)] $c_{1}(E)=\widetilde{C}$,
\item[(ii)] $c_{2}(E)=\left[l\right]$,
\item[(iii)] $E_{|D}$ is $A$-stable.
\end{itemize}
\end{proposition}

\noindent \uline{Existence of the Hartshorne--Serre bundle}

\begin{lemma}
In the hypotheses of Proposition \ref{exa1}, $H^{2}(X,\mathcal{O}_{X}(-\widetilde{C}))=0$.
\end{lemma}
\begin{proof}
We obtain, from the short exact sequence,
$$\xymatrix{ 0\ar[r]&\mathcal{O}_{X}(-\widetilde{C})\ar[r] & \mathcal{O}_{X}\ar[r] & \mathcal{O}_{\widetilde{C}}\ar[r]& 0}$$
 the following exact sequence on cohomology:
\begin{equation}
\xymatrix{H^{1}(\widetilde{C},\mathcal{O}_{\widetilde{C}}) \ar[r]&H^{2}(X,\mathcal{O}_{X}(-\widetilde{C}))\ar[r] & H^{2}(\widetilde{C},\mathcal{O}_{\widetilde{C}}).}
\label{H1C3}
\end{equation}
Since $\tilde{C}$ is a ruled surface, we have $H^{2}(\widetilde{C},\mathcal{O}_{\widetilde{C}})=0$, and
$H^{1}(\widetilde{C},\mathcal{O}_{\widetilde{C}})=H^{1}(C,\mathcal{O}_{C})$. However, $H^{1}(C,\mathcal{O}_{C})=0$ because
$C$ is a conic.
\end{proof}

\begin{lemma}   \label{lemma: normal bundle}
In the hypotheses of Proposition \ref{exa1}, $\mathcal{O}_{X}(\widetilde{C})_{|l}=\wedge^{2}\mathcal{N}_{l/X}$.
\end{lemma}
\begin{proof}

Since the claim concerns line bundles on a rational curve, it is enough to show that $c_1(\mathcal{O}_{X}(\widetilde{C})_{|l})=c_1(\wedge^{2}\mathcal{N}_{l/X})$. On the one hand, we have $c_{1}(\mathcal{O}_{X}(\widetilde{C})_{|l})=-1$, because $\widetilde{C}\cdot l=-1$. On the other hand, one can prove that $\wedge^{2}\mathcal{N}_{l/X}\simeq\mathcal{O}_l(-1)$ by a local argument, on a neighborhood of the line $l$; however, here goes a shorter proof using intersection theory in $X$.

By adjunction, we have 
\begin{eqnarray*}
        c_{1}(\wedge^{2}\mathcal{N}_{l/X})
        &=& 
        c_{1}(\mathcal{N}_{l/X})=c_{1}((\mathcal{T}_{X})_{|l})-c_{1}(\mathcal{T}_{l})
        \\
        &=&c_{1}(\mathcal{T}_{X})\cdot l-c_{1}(\mathcal{T}_{l}).
\end{eqnarray*}
We know that $c_{1}(\mathcal{T}_{X})=-c_{1}(\omega_{X})= 4H-\widetilde{C}$, so 
$$c_{1}((\mathcal{T}_{X})_{|l})=(4H-\widetilde{C})\cdot l=1.$$
%the intersection of $C_{1}$ with the exceptional divisor is 0, indeed $C_{1}\cdot \widetilde{C_{i}}= r_{*}(C_{1}\cdot \widetilde{C_{i}})$ and by the projection formula $r_{*}(C_{1}\cdot \widetilde{C_{i}})= C_{1}\cdot C_{i}=0$ in $\mathbb{P}^{3}$.
Moreover, since $l$ is a line, we have $c_{1}(\omega_{l})=-2$, so 
$$
c_{1}(T_{l})=2,
$$
which yields $c_{1}( \wedge^{2}\mathcal{N}_{l/X})=-1$, as desired.
\end{proof}

\noindent \uline{Stability of $E|_D$}

Notice that  $E_{|D}$ is also a Hartshorne--Serre bundle obtained from the point $y:=l\cap D$, with $c_{1}(E_{|D})=C$.
We apply the adapted stability criterion from Proposition \ref{Hartshorne} with the polarisation $A$, for which $\mu_A(E)=\frac{A\cdot C}{2}=1$.
Since the intersection form on $D$ is even, there is no curve $\mathcal{S}$ such that $\delta_A(\mathcal{S}) \leq 1$, hence $E_{|D}$ is stable.

\subsubsection{Second example}
The next example is slightly more sophisticated. For $c_{2}(E)$ fixed, $A$--stability over $D$ depends on the choice of the generating subscheme $Y$, which in our case is a curve. We consider $\mathcal{R}$ the proper transform of a line $\mathcal{R}_0$ in $\mathbb{P}^{3}$ which meets the conic $C$ in just one point:
\begin{equation}
\left[\mathcal{R}\right]=h-\left[l\right].
\label{d}
\end{equation}
%We denote the intersection of $D$ and $\mathcal{R}$ by $\xi$. 
Let $\mathcal{P}$ be the plane that contains the conic $C$.
The intersection $\mathcal{P}\cap D$ in $\mathbb{P}^{3}$ is a plane curve of degree 4 containing $C$, hence it is a reduced curve having $C$ as an irreducible component. Since $D$ is generic, the only other irreducible component is another smooth plane conic, which we denote by $C^{\vee}$. 

\begin{definition} \label{R0 generic}
We say that the line $\mathcal{R}_0$ is \emph{generic} if $\mathcal{R}_0$ is not a totally tangent line to $D$ in $\mathbb{P}^{3}$ at a point of $C$, that is $D\cap \mathcal{R}_0$ does not consist of a single degree 4 point in $C$.
\end{definition}

We now apply Theorem \ref{thm: Hartshorne--Serre} with $Y=\mathcal{R}$ and $\mathcal{L}=H$.

\begin{proposition}\label{exa2}
Let $X\rightarrow \mathbb{P}^{3}$ be the  blow-up of $\mathbb{P}^{3}$ along
a plane conic $C$, $D\in \left|-K_{X}\right|$ be a  generic smooth K3 surface,
corresponding
to the proper transform of a quartic in $\mathbb{P}^{3}$ containing $C$,  $\mathcal{R}$ be the proper transform of a
line $\mathcal{R}_0$ in $\mathbb{P}^{3}$ which meets the conic $C$ in just one point and $H\subset X$ be the pullback of a hyperplane; then there exists a rank 2 Hartshorne--Serre bundle $E\to X$, obtained from $\mathcal{R}$, such that:
\begin{itemize}
\item[(i)] $c_{1}(E)=H$,
\item[(ii)] $c_{2}(E)=\left[\mathcal{R}\right]$,
\item[(iii)] $E_{|D}$ is $A$-stable, if $\mathcal{R}_0$ is generic,
%$\xi\not\subset C$ and  $\xi\not\subset C^{\vee}$ 
and $A$-semi-stable otherwise.
\end{itemize}
\end{proposition}

\noindent \uline{Existence of the Hartshorne--Serre bundle}

Consider the exact sequence
$$ 0 \to \mathcal{O}_X(-H) \to \mathcal{O}_X \to \mathcal{O}_H \to 0 , $$
where $H$ can be regarded as the blow-up of a plane in two points. It follows that
\begin{equation}\label{H2}
H^{2}(X,\mathcal{O}_{X}(-H))=H^1(H,\mathcal{O}_H)=0.
\end{equation}
To use the Hartshorne--Serre construction, we need also the following lemma.
\begin{lemma}
We have $\mathcal{O}_{X}(H)_{|\mathcal{R}}=\wedge^{2}\mathcal{N}_{\mathcal{R}/X}$.
\end{lemma}
\begin{proof}
As in the proof of Lemma \ref{lemma: normal bundle}, it is enough to show that the Chern classes of both line bundles coincide.
On the one hand, we have  $c_{1}(\mathcal{O}_{X}(H)_{|\mathcal{R}})=1$, because
$H\cdot \mathcal{R}=1$.
On the other hand, since $c_{1}(\mathcal{T}_{X})= 4H-\widetilde{C}$, we have
$$c_{1}((\mathcal{T}_{X})_{|\mathcal{R}})=(4H-\widetilde{C})\cdot \mathcal{R}=3.$$
%the intersection of $C_{1}$ with the exceptional divisor is 0, indeed $C_{1}\cdot \widetilde{C_{i}}= r_{*}(C_{1}\cdot \widetilde{C_{i}})$ and by the projection formula $r_{*}(C_{1}\cdot \widetilde{C_{i}})= C_{1}\cdot C_{i}=0$ in $\mathbb{P}^{3}$.
Since $\mathcal{R}$ is a line, $c_{1}(T_{\mathcal{R}})=2$.
It follows that $c_{1}( \wedge^{2}\mathcal{N}_{\mathcal{R}/X})= c_{1}((\mathcal{T}_{X})_{|\mathcal{R}}) - c_{1}(T_{\mathcal{R}})  = 1$.
\end{proof}

\noindent \uline{Stability of $E|_D$}

The bundle $E_{|D}$ is also a Hartshorne--Serre bundle obtained from the intersection $\xi:=\mathcal{R}\cap D$, which is a discrete set. Moreover $c_{1}(E_{|D})=A$.
Again by the stablity criterion of  Proposition \ref{Hartshorne}, with $\mu_A(E)=\frac{A\cdot A}{2}=2$,
in order to prove stability (resp. semi-stability) we have to check that $\xi$ is not in $\mathcal{S}$ for any curve $\mathcal{S}$ with $\delta_A(\mathcal{S}) \leq 2$ (resp. $\delta_A(\mathcal{S}) < 2$).
\\ For semi-stability, note that the intersection on $D$ is even, so the condition on $\mathcal{S}$ is actually  $\delta_A(\mathcal{S})\leq 0$. Thus
such  $\mathcal{S}$ cannot exist, because the degree of an effective divisor is always strictly positive. 

Now we address strict stability.
The bundle $E_{|D}$ is stable if $\xi$ is not contained in any curve $\mathcal{S}$ with $\delta_A(\mathcal{S})\leq 2$. Since $\delta_A(\mathcal{S})>0$ and the intersection on $D$ is even, it follows that $\delta_A(\mathcal{S})=2$.
Then, by projection formula, it follows that $\mathcal{S}$ is a curve of degree two in $\mathbb{P}^{3}$. 
Since $D$ is generic, there are just two possibilities to be ruled out: either $\mathcal{S}=C$, or $\mathcal{S}=C^{\vee}$. So, for the first case, we must argue that $\xi\not\subset C$; for the second case, we must argue that $\xi\not\subset C^{\vee}$.

If $\xi\subset C$, then $\mathcal{R}_0 \cap D \subset C$. Since $\mathcal{R}_0 \cap C$ consists of a single point, then $\mathcal{R}_0$ is a totally tangent line to $D$ at a point of $C$. However, this contradicts the hypothesis of $\mathcal{R}_0$ being generic in the sense of Definition \ref{R0 generic}.

If $\xi\not\subset C$ and $\xi\subset C^{\vee}$, then, since $\mathcal{R}_0$ is passing through one point in $C$, the line $\mathcal{R}_0$ lies within the plane $\mathcal{P}$; it follows that $\mathcal{R}_0\cap C$ has length two, which contradicts $\mathcal{R}_0\cap C$ consisting of a single point.

%=======
%\newpage
\section{Degeneration of asymptotically stable bundles}
\label{sec: degenerations}
%=======

We used monads to obtain examples of instanton bundles in \emph{Subsection \ref{subsec: stable bundles from monads}}. Another illustration of the usefulness of monads in gauge theory is the modelling of degenerating instanton sequences. Concretely, let us examine the task of producing  a one-parameter family $\left\{E_\lambda\right\}_{\lambda>0}$ of asymptotically stable locally-free sheaves over some building block, say $X=\pp^3$, such that $E_0:=\lim\limits_{\lambda\rightarrow 0}E_\lambda$ is torsion-free but not locally free. It seems reasonable to expect that this principle may in the future contribute to shed some light on the currently open and very hard problem of moduli space compactification for instantons in higher dimensions.

Recall from standard sheaf theory that the \emph{singular locus} of a linear monad (\ref{eq: linear monad}) over $X=\pp^n$ 
is the set 
\begin{displaymath}
\Sigma=\left\{z\in \pp^n \mid \ker\alpha(z) \neq \left\{ 0 \right\} \right\},
\end{displaymath}
where $\alpha(z)$ denotes the fibre map at the point $z\in \pp^n$ of the morphism $\alpha$ appearing in (\ref{eq: linear monad}). We know from \cite[Proposition 4]{Jardim2007} that the degeneration of the associated linear sheaf $E$ is essentially determined by the topology of $\Sigma$:

\begin{criterion}       \label{cri sheaves}
Let $E$ be a linear sheaf with singular locus $\Sigma$; then
\begin{enumerate}
        \item
        $E$ is locally-free $\Leftrightarrow$ $\Sigma=\varnothing$.
        \item
        $E$ is reflexive $\Leftrightarrow$ $\Sigma$ is a subvariety with $\codim \Sigma\geq3$.
        \item
        $E$ is torsion-free $\Leftrightarrow$ $\Sigma$ is a subvariety with $\codim \Sigma\geq2$.
\end{enumerate}
\end{criterion}

This result is relevant for the following Proposition.
 
\begin{proposition}
\label{prop: curvature blow-up}
Let $\left\{\rm M_\bullet(\lambda)\right\}_{\lambda>0}$ be the family of monads over $\pp^3$ 
$$
\xymatrix{
\rm M_\bullet(\lambda): & 0 \ar[r] & \OO_{\pp^3}(-1) \ar[r]^-{\alpha_\lambda} & \OO^{\oplus4}_{\pp^3} \ar[r]^-{\beta} & \OO_{\pp^3}(1) \ar[r] & 0
}
$$
given in homogeneous coordinates  $[z_1:z_2:z_3:z_4]$ by
\begin{displaymath}
\alpha_\lambda=
\left( \begin{array}{cccc}
z_1 & z_2 & \lambda z_3 & \lambda z_4 \\
\end{array} \right)^t
\qquad \text{and} \qquad
\beta=\left( \begin{array}{cccc}
-z_2 & z_1 & -z_4 & z_3 \\
\end{array} \right).
\end{displaymath}
Then the corresponding family of linear sheaves  $\left\{E_\lambda\right\}_{\lambda>0}$ has the following properties: 
\begin{enumerate}[(i)]
        \item
        each $E_\lambda$ is a  locally free sheaf
with   $\rk(E_\lambda)=2$, ${c_1(E_\lambda)=0}$ and $c_2(E_\lambda)=1$; 
        \item
        each $E_\lambda$ is asymptotically stable, i.e., each $E_\lambda|_D$ is
        stable over a fixed  anticanonical  quartic $D\in|\OO_{\pp^3}(4)|$; 
        \item
        The limit  $E_0:=\lim\limits_{\lambda\rightarrow 0}E_\lambda$ is a properly torsion-free (i.e. not locally-free),  asymptotically semistable sheaf with singular locus $\Sigma_0=\left\{ z_1=z_2=0 \right\}$;
        \item
        With respect to the family of standard Hermitian metrics induced from
$\mathbb{C}^4$ on the elements of  $\left\{E_\lambda\right\}$, the corresponding curvatures $F_\lambda$ blow up along $\Sigma_0$ at the rate $\frac{1}{\lambda}$ as $\lambda\to0$, in the sense that $\displaystyle\lim_{\lambda\to0}\left(\lambda^m\left\Vert F_\lambda\right\Vert_{L^1(\Sigma_0)}\right)=\infty$ if $m<1$, while $\lambda\left\Vert F_\lambda\right\Vert_{L^1(\Sigma_0)}$ is bounded for small $\lambda$.
\end{enumerate}
\end{proposition}
From the viewpoint of gauge theory outlined in the \emph{Introduction}, condition $(ii)$ implies, by \cite[Theorem 58]{SaEarp2011}, that each holomorphic bundle $E_\la|_{W}$, with $\la>0$, over the ACyl Calabi-Yau $3$--fold $W:=(\operatorname{Bl}_\ell \pp^3)\smallsetminus \tilde D$, admits a HYM connection $H_\la$ exponentially asymptotic to the ASD instanton on $E_\la|_D$ along the cylindrical end. In this context, however, the Hermitian metric on each $E_\la$ whose curvature $F_\la$ we study \emph{does not} a priori correspond to the actual HYM solution. Indeed,
such solutions in general are not known explicitly, and in the noncompact
ACylCY case not even the Kähler structure can be written out in coordinates,
let alone the HYM condition be explicitly verified.
We consider instead standard Hermitian metrics induced from
$\mathbb{C}^4$, expecting that the method outlined in the proof of Proposition  
\ref{prop: curvature blow-up} can be used to show, in the future, that HYM
solutions $H_\la$  display some similar degeneration patterns, hence inform
us about the structure of the tangent cone of the singular $\la\to0$ limit.

The rest of this Section is devoted to establishing Proposition  
\ref{prop: curvature blow-up}. Note that the above construction is really a proof of principle, indeed much more general topological types for $E_\lambda$ can be arranged.

\subsection{Properly torsion-free limit as $\lambda\to0$}

For $\lambda>0$, clearly $\Sigma_\lambda=\emptyset$, hence the corresponding linear sheaf is locally free.  It follows from Theorem \ref{Thm cyclic instanton monad } that $E_\lambda$ is stable, while Proposition \ref{Prop asymp stable middle cohomology} shows that $E_\lambda$ is asymptotically stable. This
proves claims  \emph{(i)} and \emph{(ii)} in  Proposition  
\ref{prop: curvature blow-up}. 

The limit $E_0$ is obviously still a linear sheaf. The new phenomenon  is that
\begin{displaymath}
\Sigma_0=\left\{ z_1=z_2=0 \right\}\subset\pp^3
\end{displaymath}
is a \emph{curve}, hence $E_0$ is a properly torsion-free sheaf. One can show that $E_0$ is properly semistable, see \cite[Proposition 14 and Example 4]{Jardim2006}. Its restriction to an anticanonical divisor $D$ is also properly torsion-free, since it intersects the singular locus at a single point.

Moreover, $E_0|_D$ is properly semistable; indeed, the anticanonical quartic
has $\Pic(D)=\Z$, so suffices to check that $h^0(\left.E_0(-1)\right\vert_D)=h^0(\left. E_0^*(-1)\right\vert_D)=0$, cf. \cite[Lemma 13]{Jardim2006}. We already know that  $h^0(E_0(-1)|_D)=0$. Recall that $K=\ker\beta$; since $E_0^*$ is a subsheaf of $K^*$, we conclude that  $h^0(K^*(-1)|_D)=0$ implies $h^0(\left.E_0^*(-1)\right\vert_D)=0$; to check that $h^0(\left.K^*(-1)\right\vert_D)=0$, simply consider the restriction sequence
$$ 
\xymatrix{
0 \ar[r] &  K^*(-5) \ar[r] &  K^*(-1) \ar[r] &  \left.K^*(-1)\right\vert_D \ar[r] &  0 
}$$
and note that $h^0( K^*(-1))=h^1(K^*(-5))=0$, which follows from the sequence

$$ 
\xymatrix{
0 \ar[r] &  {\mathcal{O}_{\mathbb{P}^3}}(-1) \ar[r]^{\beta^*} &  {\mathcal{O}^{\oplus 4}_{\mathbb{P}^3}} \ar[r] &  K^* \ar[r] &  0
}. 
$$ 
Therefore, we conclude that the limit sheaf $E_0$ is asymptotically  (properly) \linebreak semistable, which is claim \emph{(iii)} in Proposition \ref{prop: curvature blow-up}.

Note that there is nothing particular about $\pp^3$ here. Similar families of  monads can be constructed over a wide class of projective Fano 3-folds  $X$, say, using $\mc{O} _X(1)$ and the embedding coordinates to form the maps.
%\newpage
\subsection{Curvature blow-up along degenerating instanton sequences}

An interesting feature of such explicit models is the quantitative description of the curvature blow-up rate as the family degenerates into the limiting torsion-free sheaf.   

In order to check claim \emph{(iv)} in Proposition \ref{prop: curvature blow-up},
we set out to write explicitly the natural curvature associated to the holomorphic structure of a linear sheaf $E_\la$. Dualising the map  $\alpha_\la $ in $M_\bullet(\la)$ we have an `Euler characteristic' map
\begin{displaymath}
        R_\la:=\beta\oplus\alpha_\la^\vee:\OO_{\mathbb{P}^3}^{\oplus4}\to\OO_{\mathbb{P}^3}(1)^{\oplus2}
\end{displaymath}
given  by
\begin{displaymath}
        R_\la=
        \begin{pmatrix}
                \bar z_1 & \bar z_2 & \lambda\bar z_3 & \lambda\bar z_4 \\
                -z_2 & z_1 & -z_4 & z_3 \\
        \end{pmatrix}
        \quad\text{thus}\quad
        R_\la^\vee=
        \begin{pmatrix}
                z_1 & -\bar z_2 \\
                z_2 &\bar z_1   \\
                \lambda z_3 & -\bar z_4 \\
                \lambda z_4 & \bar z_3  \\
        \end{pmatrix}.
\end{displaymath}
Let $P_\la: \Gamma(\OO_{\mathbb{P}^3}^{\oplus4}) \to \Gamma(E_\la)$ denote the orthogonal projection onto $\ker R_\la$; this is given by
$$ P_\la =1-R_\la^\vee D_\la R_\la, $$
with
$$ \begin{array}{rcrcl}
D_\la &:=& \left(R_\la R_\la^\vee\right)^{-1}&=&
\begin{pmatrix}
        \displaystyle\frac{1}{\left\vert \alpha_\la \right\vert^2} & 0 \\
        0 & \displaystyle\frac{1}{\left\vert \beta \right\vert^2} \\
\end{pmatrix}\\
\end{array}. $$

The crucial observation is that, fixing a Hermitian metric (e.g. the standard
metric from $\mathbb{C}^4$), the middle cohomology bundle can be identified as
$$
E_\la=\frac{\ker \beta}{\im \alpha_\la}\simeq\ker R_\la
$$
and the induced Chern connection is given by $\nabla_\la= P_\la \circ d$:
$$
\xymatrix{
        \Gamma(E_\la) \ar[r]^-{\nabla_\la}\ar[d]_-{\iota} & \Gamma(E_\la)\otimes\Omega^1 \\
\Gamma(\OO^{\oplus4}) \ar[r]^-{d} & \Gamma(\OO^{\oplus4})\otimes\Omega^1 \ar[u]_{ P_\la } \\
}
$$
so curvature is given by $F_\la=( P_\la d)\circ( P_\la d)=(P_{\la}\circ dP_\la)\circ d$.

We will now determine explicitly the  curvature along the curve $\Sigma_0$.
By direct differentiation, one has 
\begin{eqnarray}        \label{eq: dP}
        -dP_\la &=& \left(dR_\la^\vee\right) D_\la R_\la
                   +R_\la^\vee \left[D_\la \circ dR_\la 
                   +\left(dD_\la\right) \circ R_\la\right].
\end{eqnarray}
Specialising to the singular locus $\Sigma_0=\left\{ z_1=z_2=0 \right\}$,
with $(z_3,z_4)\neq(0,0)$, we have:
$$
R_\la^\vee|_{\Sigma_0}= 
\begin{pmatrix}
        0 & 0 \\
        0 & 0 \\
        \lambda z_3 & -\bar z_4 \\
        \lambda z_4 & \bar z_3  \\
\end{pmatrix}
\qand
D_\la|_{\Sigma_0}
=
\frac{1}{\left\vert z_3\right\vert^2+\left\vert z_4\right\vert^2}
\begin{pmatrix}
        \frac{1}{\lambda^2} & 0 \\
        0 & 1 \\
\end{pmatrix},
$$
so the projection restricted to $\Sigma_0$ takes the upper-left  $2\times2$ block diagonal form:
$$
\begin{array}{ccccc}
P_\la|_{\Sigma_0}
&=&1-\frac{1}{\left\vert z_3\right\vert^2+\left\vert z_4\right\vert^2}
R_\la^\vee|_{\Sigma_0} 
\begin{pmatrix}
        \frac{1}{\lambda^2} & 0 \\
        0 & 1 \\
\end{pmatrix}
R_\la|_{\Sigma_0} 
&=&\begin{pmatrix}
        1 &  &  & \\
         & 1 &  & \\
         &  & 0 & \\
         &  &  & 0\\
\end{pmatrix}.
\end{array}
$$
In particular,  
$\left(P_\la \circ R_\la^\vee\right)|_{\Sigma_0}=0$, so the only non-trivial contribution to 
$\left(P_\la \circ dP_\la\right)|_{\Sigma_0}$ comes from the first summand in (\ref{eq: dP}). Defining
for convenience 
$$
\xi_{ij}:=\bar z_idz_j,
$$ 
we have
\begin{eqnarray*}
\left[\left(dR_\la^\vee\right) D_\la R_\la\right]|_{\Sigma_0}
&=&
\displaystyle\frac{1}{\left\vert z_3\right\vert^2+\left\vert z_4\right\vert^2}
\begin{pmatrix}
        dz_{1} & -d\bar z_2\\
        dz_2 & d\bar z_1 \\
        \lambda dz_3 & -d\bar z_4 \\
        \lambda dz_4 & d\bar z_3  \\
\end{pmatrix}
\begin{pmatrix}
        \frac{1}{\lambda^2} & 0 \\
        0 & 1 \\
\end{pmatrix}
\begin{pmatrix}
        0 & 0 & \lambda\bar z_3 & \lambda\bar z_4 \\
        0 & 0 & -z_4 & z_3 \\
\end{pmatrix}\\
&=&
\displaystyle\frac{1}{\left\vert z_3\right\vert^2+\left\vert z_4\right\vert^2}
\left[
\begin{pmatrix}
        0 &  & \bar\xi_{42} & -\bar\xi_{32} \\
          & 0 & -\bar\xi_{41} & \bar\xi_{31}  \\
         &  & \xi_{33}  & \xi_{43} \\
        &  & \xi_{34}  & \xi_{44} 
\end{pmatrix}
+\frac{1}{\la}
\begin{pmatrix}
        0 &  & \xi_{31} & \xi_{41} \\
          & 0 & \xi_{32} & \xi_{42} \\
         &  &  \bar\xi_{44} & -\bar\xi_{34} \\
        &  &  - \bar \xi_{43} & \bar\xi_{33} 
\end{pmatrix}
\right].
\end{eqnarray*}

We conclude that
\begin{eqnarray*}
        -\left(P_\la \circ dP_\la\right)|_{\Sigma_0} &=&
        \frac{1}{\left\vert z_3\right\vert^2+\left\vert z_4\right\vert^2}
        \left[
        \begin{pmatrix}
                0  &  & \bar\xi_{42} & -\bar\xi_{32} \\
                   & 0&-\bar\xi_{41} & \bar\xi_{31}  \\
                   &  & 0 &  \\
                   &  &   & 0 \\
        \end{pmatrix}
        +
        \frac{1}{\la}
        \begin{pmatrix}
                0  &  & \xi_{31} & \xi_{41} \\
                   & 0& \xi_{32} & \xi_{42} \\
                   &  & 0&  \\
                   &  &  & 0\\
        \end{pmatrix} 
        \right]
\end{eqnarray*} 
with $\xi_{ij}$ not all zero.
Therefore, towards the limiting torsion-free sheaf $E_0$, curvature blows up along the curve $\Sigma_0$ as
$$
\left\vert\left.F_\la\right\vert_{\Sigma_0}\right\vert=\left\vert(\left.P_\la\right\vert_{\Sigma_0}\circ d\left.P_\la\right\vert_{\Sigma_0}\circ d)\right\vert\overset{\lambda\to0}{\simeq}\frac{1}{\lambda}\to\infty.
$$

%\bibliography{MyLibrary}{}
%\bibliographystyle{plainnat}

\end{document}